\def\classification#1{\def\@class{#1}}
\DeclareFontFamily{OT1}{rsfs}{}
\DeclareFontShape{OT1}{rsfs}{n}{it}{<-> rsfs10}{}
\DeclareMathAlphabet{\mathscr}{OT1}{rsfs}{n}{it}
\newcommand{\R}{{\mathbb R}}
\newcommand{\F}{\mathbb{F}}
\newtheorem{theorem}{Theorem}
\newtheorem{lemma}[theorem]{Lemma}
\newtheorem{corollary}[theorem]{Corollary}
\theoremstyle{remark}
\newtheorem{remark}[theorem]{Remark}
\thanks{The authors were partially supported by the NSF Grant DMS-1045404}
\subjclass[2000]{68R05,11B75}
\title{Areas of triangles and Beck's theorem in planes over finite fields}
\author{Alex Iosevich}
\address{Alex Iosevich: Department of Mathematics, University of Rochester, Rochester, NY}
\email{iosevich@math.rochester.edu}
\author{Misha Rudnev}
\address{Misha Rudnev: Department of Mathematics, University of Bristol, Bristol BS8 1TW, UK}
\email{m.rudnev@bristol.ac.uk}
\author{Yujia Zhai}
\address{Yujia Zhai: Department of Mathematics, University of Rochester, Rochester, NY}
\email{yzhai@u.rochester.edu}
\begin{document}

\begin{abstract} It is shown that any subset $E$ of a plane over a finite field $\F_q$, of cardinality $|E|>q$ determines not less than $\frac{q-1}{2}$ distinct areas of triangles, moreover once can find such triangles sharing a common base.

It is also shown that if $|E|\geq 64q\log_2 q$, then there are more than $\frac{q}{2}$ distinct areas of triangles sharing a common vertex. The result follows from a finite field version of the Beck theorem for large subsets of $\F_q^2$ that we prove. If $|E|\geq 64q\log_2 q$, there exists a point $z\in E$, such that there are at least $\frac{q}{4}$ straight lines incident to $z$, each supporting the number of points of $E$ other than $z$ in the interval between $\frac{|E|}{2q}$ and $\frac{2|E|}{q}.$ This is proved by combining combinatorial and Fourier analytic techniques.

We also discuss higher-dimensional implications of these results in light of recent developments.

\end{abstract}

\maketitle

\section{Introduction}

\vskip.125in

The basic question of Erd\H os combinatorics is to determine whether a sufficiently large discrete set determines a configuration of a given type. See, for example, \cite{AP95}, \cite{BMP}, \cite{TV10} and the references contained therein for a description of related problems and their consequences. Perhaps the most celebrated of these is Szemer\'edi's theorem's which says that a subset of the integers of positive density contains an arithmetic progression of any given length.

In the metric plane, interesting geometric configurations are congruent line segments or triangles.
In this context, Guth and Katz (\cite{GK}) recently resolved the long-standing Erd\H os distance conjecture (\cite{E}) in ${\Bbb R}^2$ by proving that a planar set $E$  of $n$ points determines $\Omega(\frac{n}{\log n})$ distinct distances, or, equivalently, distinct congruence classes of line segments.

Another conjecture of Erd\H os, Purdy, and Strauss (\cite{EPS}) suggested that a non-collinear set of $n$ points in $\R^2$ determines at least $\lfloor\frac{n-1}{2}\rfloor$ distinct nonzero triangle areas, the most economical configuration, for an even $n$, being two parallel copies of evenly distributed $\frac{n}{2}$ points lying on a line.
It was resolved in 2008 by Pinchasi (\cite{Pi}), yet a linear lower bound was known long before then. Pinchasi also showed that the triangles yielding distinct areas may be chosen to share a common base.

Roche-Newton with the first two listed authors of this paper (\cite{IRR}) proved that a non-collinear $n$-point set $E\subset \R^2$ generates $\Omega(\frac{n}{\log n})$ distinct triangle areas for triangles $OAB$, where $A,B\in E$ and $O$ is {\em any} origin.

Similar questions have been asked in the higher dimension Euclidean space, and many are still open. See, for example, (\cite{BMP}) and the references contained therein.

\vskip.125in

Geometric combinatorics in vector spaces over finite fields has also received much recent attention. Geometric problems in vector spaces over a finite field $\F_q$ can often be effectively resolved using Fourier analysis, provided that the point sets in question are sufficiently large. See e.g. \cite{G}, \cite{HI08}, \cite{V}, \cite{HIKR11}, and the references contained therein.

Fourier techniques have not proven to be particularly efficient for sufficiently small subsets of finite fields, where the fundamental tools come from arithmetic combinatorics, as is demonstrated in a pioneering work of Bourgain, Katz, and Tao (\cite{BKT}). Unfortunately, quantitative results involving small sets are still far behind their Euclidean prototypes. For instance, the celebrated Beck's theorem (\cite{Be}), whose finite field version, Theorem \ref{beckfinitefield} below, is developed in this paper, states that a set of $n$ points in the Euclidean plane, with no more than $cn$ collinear points, with some absolute $c$, determines $\Omega(n^2)$ distinct lines drawn through pairs of distinct points. The best currently known exponents, for small sets, are due to Helfgott and the second listed author (\cite{HR}), who proved that for any $(E=A\times A)\subset \F_q^2,$ where $q$ is a prime, and $|A|<\sqrt{q}$, \footnote{Throughout the paper we use the $|\cdot|$ notation to denote the cardinality of a finite set.
} the number of distinct lines is $\Omega(n^{1 + \frac{1}{267}})$. The power $\frac{1}{267}$, added to $1$ in the latter estimate has since been improved a few times, implicit in the recent work of Jones (\cite{J}), but would still remain very far from $1$ as it is in Beck's theorem.
\vskip.125in
In this paper we study the distribution of areas of triangles determined by subsets of the finite plane ${\mathbb F}_q^2$ and draw some conclusions about
volumes determined by subsets of ${\mathbb F}_q^d$. Here ${\mathbb F}_q$ is the field with $q$ elements and ${\mathbb F}_q^d$ is the $d$-dimensional vector space over this field. More precisely, let $(x^1, \ldots, x^{d+1})$ denote a $(d+1)$-tuple of vectors from ${\mathbb F}_q^d$. Given a set $E\subseteq \F^d_q$, define
\begin{equation} V_d(E)=\left\{\det(x^1-x^{d+1}, \ldots, x^d-x^{d+1}): x^j \in E \right\} \setminus\{0\}\label{dvol}\end{equation} as the set of $d$-dimensional nonzero volumes, defined by $(d+1)$-simplices whose vertices are in $E$,  as well as for some fixed $z\in E$, the set of pinned nonzero volumes
\begin{equation} V^z_d(E)=\left\{\det(x^1-z, \ldots, x^d-z): x^j \in E \right\} \setminus\{0\}.\label{dvolp}\end{equation}
Note that if $x^j = (x^j_1,\ldots,x^j_d)$, the subscripts referring to the coordinates relative to the standard basis in $\F_q^d$, an element of $V_d(E)$, generated by the $(d+1)$-tuple $(x^1, \ldots, x^{d+1})$ equals the determinant of the following $d+1$ by $d+1$ matrix:
\begin{equation}
V_d(x^1, \ldots, x^{d+1})=\left|\begin{array}{ccccc} 1&\ldots&1\\
x_1^1&\ldots&x^{d+1}_1\\
\vdots&\ddots&\vdots\\
x^1_{d}&\ldots&x^{d+1}_{d}
\end{array}
\right|.
\label{dvolpp}\end{equation}

\subsection{Statement of results:}

The main result of this note is the following theorem on areas determined by finite subsets of two-dimensional vector spaces over finite fields.

\begin{theorem} \label{2d} Let $E \subset {\mathbb F}_q^2$. Then the following hold.

\vskip.125in

i) Suppose that $|E|>q$. Then $|V_2(E)| \geq \frac{q-1}{2}$, and the triangles giving at least

$ \frac{q-1}{2}$ distinct areas can be chosen such that they share the same base.

\vskip.125in

ii) Suppose that $|E| \ge 64q \log_2(q)$, and $q\geq C$ for some absolute $C$. Then there

exists $z \in E$ such that

$$|V^z_2(E)| > \frac{q}{2}.$$
\end{theorem}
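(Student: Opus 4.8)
The plan is to fix, once and for all, the pinned vertex supplied by the finite-field Beck theorem (Theorem \ref{beckfinitefield}) and then show by a second-moment argument that the pinned determinant attains more than half of its possible values. After translating so that the chosen vertex sits at the origin, write $E' = E - z$ (so $0 \in E'$ and $|E'| = |E|$), and for $t \in \F_q$ set
\begin{equation}
\nu(t) = \#\{(a,b) \in E' \times E' : \det(a,b) = t\},
\end{equation}
so that $V^z_2(E) = \{t \neq 0 : \nu(t) > 0\}$. Either by Cauchy--Schwarz in the form $|V^z_2(E)| \geq \left(\sum_{t\neq0}\nu(t)\right)^2 \big/ \sum_{t\neq0}\nu(t)^2$, or by an $L^2$/variance estimate comparing $\nu$ to the uniform value $|E|^2/q$, the statement reduces to two quantitative facts: that the number of nondegenerate pinned pairs $\sum_{t\neq0}\nu(t)$ equals $(1-o(1))|E|^2$, and that the pinned area energy satisfies $\sum_{t\neq0}\nu(t)^2 \leq (1+o(1))\,|E|^4/q$. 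The first controls the main term, while the second must be held to within a factor close to $1$ of the diagonal contribution $|E|^4/q$; it is precisely this factor-$2$ room that converts into the bound $q/2$.

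For the numerator I would use the richness window from Beck only to guarantee a well-spread vertex. The degenerate count $\nu(0) = \sum_{\ell \ni z}|E\cap\ell|^2$ is bounded using $\max_\ell |E\cap\ell| \le q$ together with $\sum_{\ell \ni z}|E\cap\ell \setminus \{z\}| = |E|-1$, giving $\nu(0) \leq q|E| \leq |E|^2/(64\log_2 q) = o(|E|^2)$ since $|E| \geq 64 q\log_2 q$; hence $\sum_{t\neq0}\nu(t) = |E|^2 - \nu(0) = (1-o(1))|E|^2$. One should note that a purely combinatorial route — fixing one vertex on a rich line and counting the parallel pencil of level sets of $\det(a,\cdot)$, i.e. a projection of $E'$ — does yield $\Omega(q)$ distinct areas, but a direct averaging over the $\geq q/4$ rich directions stalls at $\approx q/4$, short of $q/2$ by exactly the factor that the energy estimate must recover.

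The heart of the matter is the energy bound, which I would treat by Fourier analysis on $\F_q^2$. Expanding the equality $\det(a,b)=\det(a',b')$ through the additive characters $\psi$ of $\F_q$ gives
\begin{equation}
\sum_{t}\nu(t)^2 = \frac{1}{q}\sum_{\psi}|S_\psi|^2, \qquad S_\psi = \sum_{a,b \in E'}\psi(\det(a,b)),
\end{equation}
whose trivial-character term is exactly the diagonal $|E'|^4/q$. Writing $\widehat{\chi}(\xi) = \sum_{b\in E'}\psi(\xi\cdot b)$ and using the identity $\det(a,b) = a^{\perp}\cdot b$ with $a^\perp = (-a_2,a_1)$, the nontrivial contributions become $S_\psi = \sum_{a\in E'}\widehat{\chi}(a^\perp) = \sum_{\xi \in (E')^\perp}\widehat{\chi}(\xi)$, where $(E')^\perp$ is the $90^\circ$-rotated copy of $E'$. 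Thus everything reduces to the restricted spectral sum $\sum_{\xi\in(E')^\perp}|\widehat{\chi}(\xi)|^2$, after which Cauchy--Schwarz gives $|S_\psi| \leq |E'|^{1/2}\big(\sum_{\xi\in(E')^\perp}|\widehat{\chi}(\xi)|^2\big)^{1/2}$ and summation over the $q-1$ nontrivial $\psi$ closes the estimate.

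The main obstacle is exactly this spectral sum. The trivial Plancherel bound $\sum_{\xi \in (E')^\perp}|\widehat{\chi}(\xi)|^2 \leq \sum_\xi |\widehat{\chi}(\xi)|^2 = q^2|E'|$ yields only $|S_\psi| \le q|E|$ and an error that dominates the diagonal unless $|E|$ exceeds $q^{3/2}$ — far above the threshold we are allowed. Hence one must prove that $E'$ exhibits no strong Fourier concentration along its rotated copy, namely $\sum_{\xi\in(E')^\perp}|\widehat{\chi}(\xi)|^2 = O(|E'|^2)$, which would give error $O(|E|^3) = o(|E|^4/q)$ precisely because $|E| \geq 64q\log_2 q$. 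This is where the combinatorial content of Beck must re-enter: rewriting $\sum_{\xi\in(E')^\perp}|\widehat{\chi}(\xi)|^2 = \sum_{b,b'\in E'}\widehat{\chi_{(E')^\perp}}(b-b')$ recasts the spectral concentration as a correlation governed by the incidence of $E'$ with lines, and the regularity that each rich line through $z$ carries $\asymp |E|/q$ points is what should bound it. Making this final passage quantitatively tight — keeping the implied constant small enough that the energy stays strictly below $2|E|^4/q$ — is the delicate step on which the value $\tfrac{q}{2}$ depends.
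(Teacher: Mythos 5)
Your proposal addresses only part ii) of the theorem; part i) is a separate claim (any $E$ with $|E|>q$ determines at least $\frac{q-1}{2}$ distinct areas, realized by triangles on a common base), and nothing in your argument touches it. It cannot be reached by the second-moment machinery, since its threshold $|E|>q$ is far below $64q\log_2 q$ and below where Theorem \ref{beckfinitefield} operates. The paper proves i) elementarily: averaging the number of solutions of $l_2-l_1=e_1-e_2$ over the $q+1$ one-dimensional subspaces $L$ produces one subspace with $|E+L|\geq \frac{q(q+1)}{2}$, so $E$ meets at least $\frac{q+1}{2}$ lines parallel to $L$; and since $|E||L|>q^2$ forces some line in direction $L$ to contain two points of $E$, one gets $\frac{q-1}{2}$ triangles with a common base and distinct heights. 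Some argument of this kind must be supplied.

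For part ii) your framework coincides with the paper's (pin $z$ via Theorem \ref{beckfinitefield}, discard $\nu(0)$, reduce by Cauchy--Schwarz to the energy bound $\sum_{t\neq0}\nu(t)^2\leq(1+o(1))|E|^4/q$), but the decisive estimate is precisely the step you leave open, and the route you sketch for it fails because you never refine $E$. Beck's theorem guarantees $\geq q/4$ rich lines through $z$ but says nothing about the \emph{other} lines through $z$: a line holds at most $q$ points and $|E|\geq 64q\log_2 q$, so nothing prevents $E$ from containing an entire line through $z$. In that case your target bound $\sum_{\xi\in(E')^\perp}|\widehat{\chi}(\xi)|^2=O(|E'|^2)$ is false: summing that quantity over the nontrivial characters gives $q\,\#\{(a,b,b')\in(E')^3:\det(a,b-b')=0\}-|E'|^3\gtrsim q\cdot q^3$, so for some $\psi$ it is $\gtrsim q^3\gg|E'|^2$. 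Equivalently, the error term in the energy is $q|F||G|\max_{x\neq0}|F\cap l_x|$ (the second term in the paper's Theorem \ref{L2}), and with $\max_{x\neq0}|F\cap l_x|\sim q$ it dominates $|E|^4/q$ unless $|E|\gtrsim q^{3/2}$ --- exactly the threshold you set out to beat. The missing idea is the paper's refinement: keep only $z$ and the points of $E$ lying on $q/4$ of the rich lines through $z$ (each carrying between $\frac{|E|}{2q}$ and $\frac{2|E|}{q}$ points), obtaining $E'$ with $|E'|>|E|/8\geq 8q\log_2 q$ in which, after translating $z$ to the origin, \emph{every} line through the origin supports $O(|E'|/q)$ points. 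Then $\max_{x\neq0}|F\cap l_x|=O(|F|/q)$ and the error term becomes $O(|F|^2|G|)=o(|F|^2|G|^2/q)$, closing the bound. Note also that the correct mechanism requires control only of lines through the origin: the sum over $\psi$ (equivalently over dilations $s$) converts the spectral sum into $\sum_{\xi\neq0}|\widehat{G}(\xi)|^2\,|F\cap l_\xi|$, which Plancherel then handles; your closing reduction to $\sum_{b,b'\in E'}\widehat{\chi_{(E')^\perp}}(b-b')$, i.e.\ to incidences with lines in all directions, points toward a harder and unnecessary statement. With the refinement inserted, your outline becomes the paper's proof; without it the ``delicate step'' cannot be closed.
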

\begin{remark}\label{offbytwo}
The claim i) can be viewed as a slightly weaker finite field version of the aforementioned result of Pinchasi in $\R^2$. Namely, we are off by a constant in the sense that we do not know an example of a set with $q+1$ points in $\F_q^2$, which would not generate {\em} all possible $q-1$ nonzero areas of triangles. The question of what is the minimum size of $E\subset \F_2^q$ to yield all possible areas in $\F_q\setminus\{0\}$ is open. It seems reasonable to conjecture that $|E|=q+1$ would be necessary and sufficient, for any $q$. The analog of the conjecture in higher dimensions would be that $|E|=q^{d-1}+1$, necessary and sufficient to yield all possible $d$-dimensional volumes. In this direction, Corollary \ref{dveruki} below shows that if $|E|\geq 2q^{d-1}$, then all the possible volumes are determined. \end{remark}

\begin{remark} In the context of recent work on sufficiently large sets in $\F_q^2$, Theorem \ref{2d} is an improvement over the earlier results \cite{HI08}, \cite{V1}, which established the threshold $|E|=\Omega(q^{\frac{3}{2}})$ in order for a general $E\subseteq \F_q^2$ to determine $\Omega(q)$ distinct areas of triangles. \end{remark}

\vskip.125in

The claim ii) of Theorem \ref{2d} follows from the following theorem, which can be regarded as a finite field version of the aforementioned theorem due to Beck (\cite{Be}), for sufficiently large point sets.

\begin{theorem} \label{beckfinitefield} (Beck's theorem in ${\mathbb F}_q^2$). Suppose that $E \subset {\mathbb F}_q^2$ with $q\geq C$, for some absolute $C$, and
$$|E| \geq 64 q \log q.$$ Then pairs of distinct points of $E$ generate at least $\frac{q^2}{8}$ distinct straight lines in $\F_q^2$. Moreover, there exists a point $z\in E$ and at least $\frac{q}{4}$ straight lines incident to $z$, each supporting at least $\frac{1}{2}\frac{|E|}{q}$ and fewer than
$2\frac{|E|}{q}$ points of $E$, other than $z$.
\end{theorem}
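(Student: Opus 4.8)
The plan is to run a second-moment (variance) argument on the point counts of lines, which turns out to control everything, and then to finish by a pigeonhole step; I will indicate where a Fourier/spectral incidence bound could be substituted, in keeping with the methods advertised in the paper.

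Write $n=|E|$ and, for a line $\ell\subset\F_q^2$, let $k_\ell=|E\cap\ell|$. Since every point lies on exactly $q+1$ lines and every pair of distinct points on a unique line, summing over all $q^2+q$ lines gives the exact identities $\sum_\ell k_\ell=(q+1)n$ and $\sum_\ell k_\ell^2=(q+1)n+n(n-1)=nq+n^2$. The mean count is therefore $n/q$, and a direct expansion yields the key variance identity
\[ \sum_\ell\Bigl(k_\ell-\tfrac{n}{q}\Bigr)^2 = nq-\tfrac{n^2}{q}\le nq. \]
This centered second moment is the engine of the argument: although the raw second moment $nq+n^2$ is useless (dominated by the collinear-pairs term $n^2$), centering at the mean collapses it to $O(nq)$.

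Next I would split the lines by richness. Call $\ell$ \emph{good} if $\frac{n}{2q}\le k_\ell<\frac{2n}{q}$, \emph{light} if $k_\ell<\frac{n}{2q}$, and \emph{heavy} if $k_\ell\ge\frac{2n}{q}$, and set $I=\sum_\ell k_\ell=(q+1)n$, the total number of incidences. Light lines contribute at most $\frac{n}{2q}\cdot(q^2+q)=\frac{I}{2}$ incidences, simply because there are at most $q^2+q$ lines in all. For the heavy lines I would dyadically decompose $H_j=\{\ell:2^j\frac{n}{q}\le k_\ell<2^{j+1}\frac{n}{q}\}$ for $j\ge1$; the variance bound gives $|H_j|\le q^3/(4^{j-1}n)$, whence the incidences on $H_j$ total at most $8q^2/2^j$ and, summing the geometric series, the incidences on all heavy lines are at most $8q^2$. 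Because $I=(q+1)n\ge 64q^2\log q$, this is at most $\frac{I}{8\log q}\le\frac{I}{8}$. Hence the good-line incidences satisfy $I_{\mathrm{good}}\ge I-\frac{I}{2}-\frac{I}{8}=\frac{3I}{8}$: a definite fraction of all incidences sits on good lines.

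From $I_{\mathrm{good}}\ge\frac{3I}{8}$ both conclusions follow by counting. Since each good line carries fewer than $\frac{2n}{q}$ points, the number of good lines is at least $\frac{3I/8}{2n/q}=\frac{3(q+1)q}{16}>\frac{q^2}{8}$; as every good line contains at least $\frac{n}{2q}\ge 32\log q\ge 2$ points of $E$, these are $\ge\frac{q^2}{8}$ distinct lines spanned by point pairs, which is the first assertion. For the second, the $\frac{3(q+1)n}{8}$ good-line incidences are distributed among the $n$ points of $E$, so by pigeonhole some $z\in E$ lies on at least $\frac{3(q+1)}{8}>\frac{q}{4}$ good lines; each such line supports $k_\ell-1$ points of $E$ other than $z$, and since $\frac{n}{2q}$ is large the shift by one is harmlessly absorbed (if one insists, by defining the thresholds as $\frac{n}{2q}+1$ and $\frac{2n}{q}$), so each supports between $\frac12\frac{|E|}{q}$ and $2\frac{|E|}{q}$ points other than $z$. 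The main obstacle throughout is the heavy lines: a priori a handful of very rich lines could absorb nearly all incidences, and the naive second moment does not exclude this. The resolution is the exact centered-second-moment identity above, which caps the number of lines of richness $t\gg n/q$ at $O(q^3/t^2)$; this is exactly the content a Fourier/spectral incidence estimate in $\F_q^2$ would supply, so one may equally derive it that way, with everything else being bookkeeping against the stated constants.
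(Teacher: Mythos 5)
Your proof is correct, and its key ingredient is genuinely different from the paper's. Writing $n=|E|$, the paper obtains the rich-line bound $|L_k|\leq q n/(k-q^{-1}n)^2$ by invoking Vinh's Fourier/spectral Szemer\'edi--Trotter-type incidence theorem, and then counts \emph{pairs} of points dyadically: each heavy dyadic class supports $O(nq)$ pairs, so summing over the $\log_2 q$ classes costs a logarithm, and this is precisely where the hypothesis $|E|\geq 64q\log_2 q$ gets consumed. You derive the same rich-line bound instead from the exact, purely combinatorial identity
\[
\sum_\ell\Bigl(k_\ell-\tfrac{n}{q}\Bigr)^2=nq-\tfrac{n^2}{q},
\]
which follows from double counting alone (each pair of distinct points lies on a unique line; each point lies on exactly $q+1$ lines), with no Fourier analysis and no external incidence theorem; and you count \emph{incidences} rather than pairs, so your heavy classes sum geometrically to $8q^2$ with no logarithmic accumulation. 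Your parenthetical remark that this identity is the elementary content of the spectral incidence bound in this special case is accurate. Beyond being self-contained, your argument actually proves the theorem under the weaker hypothesis $|E|\geq 64q$: the only place you use the logarithm is the comparison $8q^2\leq I/(8\log q)$, where $8q^2\leq I/8$ already suffices, whereas the paper's pair-counting genuinely needs $|E|$ of order $q\log q$. (This is consistent with the paper's remark that the logarithm is ``definitely needed for our proof''; it is needed for their bookkeeping, not, as your argument shows, for the Beck-type statement itself.) Two points of hygiene, both of which you handle: the off-by-one fix via the lower threshold $\tfrac{n}{2q}+1$ is exactly what the paper builds into its Lemma defining $\mathcal{L}$, and it perturbs your light-line incidence count only by $q(q+1)\leq I/32$, so all your constants survive; and a good line has at least $n/(2q)\geq 2$ points of $E$, hence is indeed generated by a pair of distinct points of $E$, which is needed for the first conclusion.
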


\begin{remark} It is well known that Fourier analysis yields nearly optimal estimates over finite fields for sufficiently large sets. For instance, Garaev (\cite{G}) proves an optimal sum-product lower bound  for $|A+A|\,+\,|A\cdot A|$, when $A\subseteq \F_q$ is such that $|A|>q^{\frac{2}{3}}$. The first quantitative estimate in this direction was proved by Hart, the first listed author of this paper and Solymosi in \cite{HIS07}.
Similarly, in $\F_q^2$ the ``threshold'' for what can be regarded as a sufficiently large set $E=A\times A$ is usually $q^{\frac{4}{3}}$, and $q^{\frac{3}{2}}$ for general $E$. See, e.g., \cite{HI08}, \cite{V1}, \cite{HIKR11}, and the references contained therein. Theorem \ref{beckfinitefield}, however, delivers an optimal (up to a constant) estimate for $|A|=\Omega \left( \sqrt{q\log_2 q}\right)$. Note that the aforementioned estimate $\Omega(n^{1 + \frac{1}{267}})$ in \cite{HR} for the number of lines generated by $A\times A$ with $|A|=O(\sqrt{q})$ (valid only when $q$ is a prime) is strikingly weaker. In the same vein, sum-product results of \cite{G}, \cite{HIS07}, valid for sufficiently large sets are considerably stronger than what is known for small sets, where the best result so far in prime fields is due to the second listed author (\cite{Ru}), generalised to $\F_q$ by Li and Roche-Newton (\cite{LRN}).
\end{remark}

\begin{remark} The construction in Corollary 2.4 in \cite{HIKR11} implies that if $|E|=o(q^{\frac{3}{2}})$, then there exists $z \in E$ such that $|V^z_2(E)|=o(q)$. In particular, this implies that part ii) of Theorem \ref{2d} cannot be strengthened to say that one gets a positive proportion of the areas from {\em any} fixed vertex, which would be somewhat analogous to the above-mentioned Euclidean result of \cite{IRR}. We do not know whether the logarithmic term in the assumption for part ii) is necessary. It is definitely needed for our proof. \end{remark}

\begin{remark} The forthcoming proof of Theorem \ref{beckfinitefield} is based on Vinh's (\cite{V}) finite field variant, quoted as Theorem \ref{vinhkrutoi} below, of the classical Szemer\'edi-Trotter (\cite{ST}) theorem on the number of incidences $I(E,L)$ of points in $E$ and lines in $L$. Vinh's theorem becomes its equal in the strength of exponents only if the underlying sets $E,L$ involved are rather large, that is if one takes $|E||L|\sim q^{3}$, which amounts to $|E|=\Omega(q^{\frac{3}{2}})$ in the interesting case when $|E|\sim|L|$. Indeed, this is the threshold when the first term in Vinh's incidence estimate (\ref{vest}) dominates, giving  $I(E,L)=O((|E||L|)^{\frac{2}{3}})$ for the number of incidences, as it is the case in the principal term of the celebrated Euclidean Szemer\'edi-Trotter estimate. Beck's theorem, however, does not require the full strength of the Szemeredi-Trotter incidence theorem and would already follow if the fact $k^3$ in the denominator of the first term of (\ref{stb}) below is replaced by $k^{2+\epsilon}$ for some $\epsilon>0$.

In the original paper (\cite{Be}), Beck had $\epsilon = \frac{1}{20}$, rather than $\epsilon = 1$, provided by the Szemer\'edi-Trotter theorem as in the estimate (\ref{stb}) below. In other words, Beck's theorem is weaker than the Szemer\'edi-Trotter theorem. This is precisely the reason why we can afford to use (\ref{vest}) and succeed in obtaining a much better threshold $|E|=\Omega(q\log_2 q)$ in Theorem \ref{beckfinitefield} (rather than $|E|=\Omega(q^{\frac{3}{2}})$) getting a nearly sharp (up to the endpoint term $\log_2 q$) variant of the Beck theorem in the finite plane ${\mathbb F}_q^2$, as to the minimum size of a set $E$ to generate
$\Omega(q)$ distinct straight lines.
\end{remark}

Theorem \ref{2d} can be easily boot-strapped to higher dimensions, since if a set determines a certain number of $(d-1)$-dimensional volumes when restricted to a $(d-1)$-dimensional hyperplane, and on top of that contains at least one point outside of this hyperplane, then it automatically determines at least that many $d$-dimensional volumes. As a consequence of our method, we obtain the following improvement of a result of Vinh (\cite{V2}) who proved that if $|E|\geq (d-1)q^{d-1}$, $d \ge 3$, then $V_d(E)={\mathbb F}_q \ \backslash \{0\}$.

\begin{corollary} \label{dveruki}  Let $E \subset {\mathbb F}_q^d$, $d \ge 3$.

\vskip.125in

i) Suppose that $|E|>q^{d-1}$. Then $|V_d(E)|\geq \frac{q-1}{2}$.

\vskip.125in

ii) Suppose that $|E|\geq 2q^{d-1}$. Then $V_d(E)={\Bbb F}_q \ \backslash \{0\}$.
\end{corollary}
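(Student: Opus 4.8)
The engine of the whole corollary is a single slicing identity relating $d$-dimensional volumes to $(d-1)$-dimensional ones. Fix the affine hyperplane $H=\{x\in\F_q^d: x_d=t_0\}$ and suppose $x^1,\dots,x^d\in H$ while $z=(z_1,\dots,z_d)\notin H$, so that $z_d\neq t_0$. Writing $y^j=(x^j_1,\dots,x^j_{d-1})$ for the projections and starting from the determinant in (\ref{dvolpp}), subtract $t_0$ times the all-ones row from the last coordinate row: that row becomes $(0,\dots,0,z_d-t_0)$, and one cofactor expansion yields
$$V_d(x^1,\dots,x^d,z)=(z_d-t_0)\,V_{d-1}(y^1,\dots,y^d).$$
Since $z_d-t_0\neq 0$, multiplication by it is a bijection of $\F_q\setminus\{0\}$; hence any family of nonzero $(d-1)$-volumes realized by the projected base produces, after scaling by this fixed factor, an equally large family of nonzero $d$-volumes.

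For part i) I would argue by induction on $d$, the base case $d=2$ being Theorem \ref{2d}.i. Given $|E|>q^{d-1}$, slice $\F_q^d$ by the $q$ parallel hyperplanes $\{x_d=t\}$. Their occupancies sum to $|E|>q^{d-1}$, so some hyperplane $H=\{x_d=t_0\}$ carries $|E\cap H|>q^{d-2}$ points; by the inductive hypothesis applied inside $H\cong\F_q^{d-1}$ this section already determines at least $\frac{q-1}{2}$ distinct $(d-1)$-volumes. Because $|E|>q^{d-1}=|H|$, there is an apex $z\in E\setminus H$, and the identity above scales these volumes by the nonzero constant $z_d-t_0$, giving $|V_d(E)|\geq |V_{d-1}(E\cap H)|\geq \frac{q-1}{2}$.

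For part ii) the natural scheme is again induction on $d$, now with the stronger inductive statement that $|F|\geq 2q^{k-1}$ forces $V_k(F)=\F_q\setminus\{0\}$. The inductive step for $d\geq 4$ is clean: averaging over the $q$ parallel hyperplanes produces one, say $H$, with $|E\cap H|\geq |E|/q\geq 2q^{d-2}$; since $\dim H=d-1\geq 3$ the inductive hypothesis gives $V_{d-1}(E\cap H)=\F_q\setminus\{0\}$, and a single apex $z\in E\setminus H$ scales this onto all of $\F_q\setminus\{0\}$ via the slicing identity. Thus everything reduces to the base case $d=3$.

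The base case $d=3$ is where I expect the real difficulty, and where the extra factor of $2$ in the hypothesis must be spent: one cannot simply descend to a planar \emph{all-areas} statement, since Theorem \ref{2d}.i only delivers $\frac{q-1}{2}$ areas, and a subfield-type grid shows that $2q$ points in $\F_q^2$ need not realize every area. Instead I would combine two mechanisms through the slicing identity $V_3(E)\supseteq T\cdot S$, where $S=V_2(E\cap H_{t_0})\subseteq\F_q\setminus\{0\}$ is the set of base areas of a chosen occupied plane and $T=\{t-t_0: t\neq t_0,\ \{x_3=t\}\ \text{occupied}\}$ is the set of realized heights. By the elementary group-pigeonhole fact that $A,B\subseteq \F_q\setminus\{0\}$ with $|A|+|B|>q-1$ satisfy $A\cdot B=\F_q\setminus\{0\}$, it suffices to find a plane with $|S|\geq\frac{q-1}{2}$ (any plane with more than $q$ points, via Theorem \ref{2d}.i) together with more than $\frac{q-1}{2}$ occupied heights. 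The delicate point is the concentrated regime, where few planes are occupied; there I would fall back on the structural observation that a plane carrying more than $\frac{q^2}{2}$ points contains a line with more than $\frac q2$ points, whose coordinate difference set is forced to be all of $\F_q$ (two translates of a set of size $>\frac q2$ must meet), so that plane alone already yields every area and one apex finishes the job. Showing that the surplus $|E|\geq 2q^2$ always lands in one of these two regimes — enough occupied heights, or a plane fat enough to realize all areas on its own — is the crux of the argument.
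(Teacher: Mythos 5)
Your part i) and your inductive step for part ii) coincide with the paper's own proof: pigeonhole over the $q$ parallel hyperplanes $\{x_d=t\}$, an apex $z\in E$ off the rich hyperplane (available because $|E|$ exceeds $q^{d-1}=|H|$), and the cofactor identity $V_d(x^1,\dots,x^d,z)=(z_d-t_0)\,V_{d-1}(y^1,\dots,y^d)$, the nonzero factor $z_d-t_0$ permuting $\F_q\setminus\{0\}$. Up to the harmless normalization $t_0=0$ (the paper translates the rich hyperplane to $\{x_d=0\}$ before expanding the determinant), this is the same argument.

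The genuine gap is exactly where you located it: the base case $d=3$ of part ii). The paper does not prove this case at all -- it imports it as a known theorem of Vinh \cite{V2}, who showed $|E|\geq (d-1)q^{d-1}$ forces $V_d(E)=\F_q\setminus\{0\}$; at $d=3$ this is precisely the hypothesis $|E|\geq 2q^2$, so the actual content of part ii) of Corollary \ref{dveruki} is the improvement from $(d-1)q^{d-1}$ to $2q^{d-1}$ for $d\geq 4$. Your proposed substitute does not close, because your two regimes do not exhaust the possibilities under $|E|\geq 2q^2$. Take, for instance, five parallel planes each carrying $2q^2/5$ points of $E$ and nothing else. No plane has more than $q^2/2$ points (indeed no plane need contain a line with more than $q/2$ points, since the average over the $q$ parallel lines is only $2q/5$), so the ``fat plane'' mechanism is unavailable; and the set of occupied heights gives only $|T|=4$, so with $|S|\geq\frac{q-1}{2}$ from Theorem \ref{2d} i) you have $|S|+|T|\geq\frac{q-1}{2}+4$, which is below the threshold $q-1$ needed for $S\cdot T=\F_q\setminus\{0\}$ as soon as $q>9$. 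In this intermediate regime -- a handful of moderately dense planes -- your method yields only about half of all volumes. What would rescue it is the fact that a plane containing a positive proportion of all $q^2$ points determines \emph{every} nonzero area; that is true, but it is a Fourier-analytic statement of the same nature as Vinh's theorem (Theorem \ref{2d} i) is too weak, giving only $\frac{q-1}{2}$ areas no matter how dense the plane), and it is precisely the kind of external input the paper's citation of \cite{V2} supplies.
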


\vskip.25in

\section{Proof of part i) of Theorem \ref{2d}}

\vskip.125in

\begin{proof} The core of the forthcoming proof of claim i) follows the lines of  Lemmas 2.1 and 2.2 in \cite{GR}, which in turn go back to the ``statement about generic projection'' in  \cite{BKT}, Lemma 2.1.

Let us first show that any set $E\subseteq \F_q^2,$ with $|E|>q$,  determines all possible directions. More precisely, every linear subspace $L\subset \F_q^2$ contains a nonzero element of $E-E$. This is a finite field analogue of the well-known result of Ungar (\cite{U}) that $2N$ non-collinear points in the Euclidean plane determine at least $2N$ distinct directions.

Let $L$ be a one-dimensional linear subspace of $\F^2_q$. Consider the sum set
$$S=E+L=\{s=e+l:\,e\in E,l\in L\}.$$ Since $|L||E|>|\F_q^2|=q^2$, there is an element $s\in S$ with more than one representation as a sum. More precisely,
\begin{equation}
s = e_1+l_1 = e_2+l_2,\qquad (e_1,e_2,l_1,l_2)\in E\times E\times L\times L, \qquad l_1\neq l_2.
\label{subsp}\end{equation}
Hence \begin{equation}\label{diff}
l_2-l_1 = e_1-e_2,\end{equation}
which implies that $E$ determines all possible directions in the sense described above.

\vskip.125in

Now average the number of solutions of the equation (\ref{diff}), with $(e_1,e_2,l_1,l_2)\in E\times E\times L\times L$, over the $\frac{q^2-1}{q-1}=q+1$ subspaces $L$. For each $L$, the pair $(e_1, e_2)$, $e_1\neq e_2$ in (\ref{diff}) determines the subspace $L$. Moreover, each $l\in L$ can be represented as a difference $l_2-l_1$ in exactly $q$ different ways. Including the trivial solutions where $e_1=e_2$, we have
$$ |\{(e_1,e_2,l_1,l_2)\in E\times E\times L\times L: \mbox{ (\ref{diff}) holds for some $L$}\}|$$
$$\begin{aligned} &=|E|q(q+1) + |E|^2q \\ & \leq 2q|E|^2.\end{aligned}$$

It follows that there exists a subspace $L$, such that
\begin{equation} \label{pigeonline}|\{(e_1,e_2,l_1,l_2)\in E\times E\times L\times L\}:\mbox{ (\ref{diff}) holds}\} | \leq 2 |E|^2\frac{q}{q+1}. \end{equation}

It follows, by the Cauchy-Schwartz inequality, that with this particular $L$,

\vskip.125in

\begin{equation} \label{popularity}\begin{aligned}
|E+L| & \geq  \frac{|E|^2|L|^2}{|\{(e_1,e_2,l_1,l_2)\in E\times E\times L\times L\}:\mbox{ (\ref{diff}) holds}\}| } \\ &\geq  \frac{q(q+1)}{2}.\end{aligned} \end{equation}

Indeed, if $s\in E+L$, and $\nu(s)$ is the number of realisations of $s$ as a sum, then
$$
\sum_{s} \nu(s) = |E||L|,\qquad \sum_{s} \nu^2(s) =|\{(e_1,e_2,l_1,l_2)\in E\times E\times L\times L\}:\mbox{ (\ref{diff}) holds}\}|,
$$
and by the Cauchy-Schwartz inequality
$$ |S=E+L|\leq \frac{\left(\sum_{s\in S} \nu(s) \right)^2}{\sum_{s\in S} \nu^2(s)}.
$$

\medskip

We conclude from (\ref{popularity}) that there are points of $E$ in at least $\frac{q+1}{2}$ different parallel lines. Moreover one of these lines, as has been shown in the beginning of the proof, has at least two distinct points $e_1,e_2\in E$. It follows that $E$ determines at least $\frac{q-1}{2}$ distinct triangles, with the same base $e_1e_2$ and distinct nonzero heights, that is the third vertex of the triangle lying on different lines parallel to the base. Thus, there are at least $\frac{q-1}{2}$ distinct nonzero triangle areas.

\end{proof}

\section{Proof of part ii) of Theorem \ref{2d}}

\vskip.125in

\subsection{Fourier mechanism}
\label{fouriersubsection}

We shall need the following Fourier-analytic result, which is an easy variant of the corresponding estimate from \cite{HI08} and \cite{HIKR11}.

Let $\chi$ be a non-trivial additive character over $\F_q$ and let $F(x)$ for the characteristic function of a set $F\subseteq \F_q^2$. Define the Fourier transform $\widehat F$ of $F(x)$  as
$$\widehat{F}(\xi) = \frac{1}{q^2} \sum_x F(x) \chi(-\xi\cdot x).$$

\begin{theorem} \label{L2} Let $F,G \subset \F_q^2$. Suppose $0\not \in F$. Let, for $t\in \F_q$,
$$ \nu(t)=|\{(x,y) \in F \times G: x \cdot y=t \}|,$$ where $x \cdot y=x_1y_1+x_2y_2$.

Then
\begin{equation} \label{L2estimate} \sum_t \nu^2(t) \leq {|F|}^2 {|G|}^2 q^{-1}+q |F| |G| \cdot \max_{x\neq 0}|F\cap l_x|, \end{equation} where
$$l_x=\left\{sx: s \in {\mathbb F}_q \setminus\{0\} \right\},$$ with $x \in {\mathbb F}_q^2\setminus\{0\}$.
\end{theorem}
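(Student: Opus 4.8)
The plan is to expand $\sum_t \nu^2(t)$ into an additive character sum, peel off the zero frequency as the main term, and bound the remaining frequencies by a Cauchy--Schwarz estimate whose only loss is the maximal number of points of $F$ on a line through the origin. First I would record the combinatorial identity
\begin{equation*}
\sum_t \nu^2(t) = |\{(x,x',y,y')\in F\times F\times G\times G : x\cdot y = x'\cdot y'\}|,
\end{equation*}
and rewrite the indicator of $x\cdot y = x'\cdot y'$ via the orthogonality relation $\mathbf 1[u=0] = q^{-1}\sum_{s\in\F_q}\chi(su)$. Since the resulting quadruple sum factors, this turns the count into $q^{-1}\sum_{s\in\F_q}|T(s)|^2$, where $T(s) = \sum_{x\in F,\,y\in G}\chi(s\,x\cdot y)$ and $|T(s)|^2 = T(s)\overline{T(s)}$.

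Next I would split off the term $s=0$. Here $T(0)=|F||G|$, so its contribution is $q^{-1}|F|^2|G|^2$, which is exactly the first term on the right of (\ref{L2estimate}). It then remains to bound the error $q^{-1}\sum_{s\neq 0}|T(s)|^2$ by $q|F||G|\max_{x\neq 0}|F\cap l_x|$, the second term.

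For the error term I would write $T(s)=\sum_{\xi\in sF}g(\xi)$, where $g(\xi)=\sum_{y\in G}\chi(\xi\cdot y)$ and $sF=\{sx:x\in F\}$ has cardinality $|F|$ because $s\neq 0$. Cauchy--Schwarz gives $|T(s)|^2\le |F|\sum_{\xi\in sF}|g(\xi)|^2$. Summing over $s\neq 0$ and interchanging the order of summation, each frequency $\xi$ is counted with multiplicity $|\{s\neq 0:\xi/s\in F\}|$; the hypothesis $0\notin F$ ensures $0\notin sF$, so only $\xi\neq 0$ occur, and as $s$ runs over the nonzero scalars the point $\xi/s$ runs bijectively over the punctured line $l_\xi$, whence this multiplicity equals $|F\cap l_\xi|\le \max_{x\neq 0}|F\cap l_x|$. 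Feeding in the Plancherel identity $\sum_\xi|g(\xi)|^2=q^2|G|$ then yields $\sum_{s\neq 0}|T(s)|^2\le |F|\,\bigl(\max_{x\neq 0}|F\cap l_x|\bigr)\,q^2|G|$, and dividing by $q$ produces the claimed error bound.

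The one step requiring care---and the reason a crude estimate fails---is this final interchange of summation. Bounding $\sum_{\xi\in sF}|g(\xi)|^2\le\sum_\xi|g(\xi)|^2=q^2|G|$ for each fixed $s$ and then summing over the roughly $q$ nonzero values of $s$ would cost an extra factor of $q$ and give the useless bound $\sim q^2|F||G|$. The improvement comes from summing in $s$ \emph{first}, so that the change of variables $\xi=sx$ records each frequency $\xi$ with multiplicity only $|F\cap l_\xi|$ rather than once per scalar; the assumption $0\notin F$ is precisely what keeps the zero frequency out of this count and makes $s\mapsto\xi/s$ a bijection onto the punctured line $l_\xi$.
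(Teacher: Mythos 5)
Your proof is correct and is essentially the paper's own argument: both rest on character orthogonality to isolate the $s=0$ main term $q^{-1}|F|^2|G|^2$, a Cauchy--Schwarz step losing exactly a factor of $|F|$, the change of variables $\xi=sx$ (valid since $0\notin F$) that converts the sum over nonzero dilates into the multiplicity $|F\cap l_\xi|$, and Plancherel for $G$. The only cosmetic difference is the order of operations---you expand in characters first and apply Cauchy--Schwarz to $T(s)$ for each $s\neq 0$, while the paper applies Cauchy--Schwarz to $\nu(t)$ before introducing characters---but both routes pass through the identical intermediate bound $q^{-1}|F|\sum_{s\neq 0}\sum_{\xi\in sF}|g(\xi)|^2$.
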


\begin{proof}
To prove the theorem, observe that for any $t\in \F_q$, one has, by the Cauchy-Schwartz inequality:
$$ \begin{aligned} \nu^2(t)&={\left( \sum_{x \cdot y=t} F(x)G(y) \right)}^2 \\ & \leq |F| \; \cdot \sum_{x \cdot y=x \cdot y'=t} F(x)F(y)F(y').\end{aligned}$$

Using the identity
$$
\sum_{x \cdot y=x \cdot y'} 1 = q^{-1} \sum_{x,y\in \F^2_q} \chi(x\cdot y - x\cdot y'),
$$
one has
$$ \begin{aligned}\sum_t \nu^2(t) &\leq |F| \cdot \sum_{x \cdot y=x \cdot y'} F(x)G(y)G(y')
\\ & ={|F|}^2{|G|}^2q^{-1}+q^{-1}|F| \sum_{s \not=0} \sum_{x,y,y'} \chi(sx \cdot (y-y')) F(x)G(y)G(y')
\\ & ={|F|}^2{|G|}^2q^{-1}+q^{3} |F| \sum_{s \not=0} \sum_x {|\widehat{G}(sx)|}^2 F(x)
\\ & ={|F|}^2{|G|}^2q^{-1}+q^{3} |F|\sum_{s \not=0} \sum_{x \not = 0}{|\widehat{G}(x)|}^2 F(sx)
\\ & \leq {|F|}^2{|G|}^2q^{-1}+q |F| |G| \cdot \max_{x\neq 0}|F\cap l_x|,\end{aligned}$$
using the Plancherel identity as well as the fact the assumption that $0\not \in F$. This completes the proof of Theorem \ref{L2}.\end{proof}

\vskip.125in

\subsection{Finite field variant of the Beck theorem}
\label{becksubsection}
 Here we prove Theorem \ref{beckfinitefield}, a variant of the Euclidean theorem due to Beck (\cite{Be}) in the finite field context. Recall that Beck's theorem says that either a positive proportion of a set of $n$ points in the Euclidean plane lie on single line, or there exists a constant multiple of $n^2$ distinct lines each containing at least two points of $E$. Note that Beck's theorem follows easily from the following formulation of the celebrated Szemer\'edi-Trotter incidence theorem (\cite{ST}).

\begin{theorem} \label{ST} Let $E$ be a collection of points in $\R^2$, and $L(E)$ the set of lines determined by distinct pairs of points of $E$. For $k\geq 2$, let $L_k\subseteq L(E)$ denote the lines supporting at least $k$ points of $E$. Then there exists $C>0$ such that
 \begin{equation}\label{stb}
 |L_k| \leq C \left( \frac{|E|^2}{k^3} + \frac{|E|}{k} \right).
\end{equation}
\end{theorem}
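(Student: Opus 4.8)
The plan is to deduce the rich-line bound (\ref{stb}) from the sharp Szemer\'edi--Trotter incidence estimate
$$
I(P,L)=O\!\left(|P|^{2/3}|L|^{2/3}+|P|+|L|\right),
$$
valid for any finite point set $P$ and line set $L$ in $\R^2$, which I would first establish by Sz\'ekely's crossing-number argument. Concretely, I would draw the graph $G$ whose vertices are the points of $E$ and whose edges join consecutive points of $E$ along each line of $L$; a line carrying $r$ points of $E$ then contributes $r-1$ edges, so the number of edges satisfies $e(G)\geq I(E,L)-|L|$, while the number of vertices is $v(G)=|E|$. Since two distinct lines meet in at most one point, two edges of $G$ can cross only where their supporting lines cross, and on each of those two lines at most one edge passes through that point; hence the crossing number obeys $\operatorname{cr}(G)\leq\binom{|L|}{2}\leq|L|^2$.

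The heart of the matter, and the step I expect to carry the real weight, is the crossing-number inequality: for any plane drawing with $e(G)\geq 4v(G)$ one has $\operatorname{cr}(G)\geq e(G)^3/(64\,v(G)^2)$. I would prove this in two moves. First, Euler's formula yields the linear bound $\operatorname{cr}(G)\geq e(G)-3v(G)$: deleting one edge from each crossing pair produces a planar graph, for which $e\leq 3v-6$. Second, I would amplify this by the probabilistic deletion method, keeping each vertex independently with probability $p$, applying the linear bound to the random subgraph, passing to expectations, and optimizing at $p\sim v(G)/e(G)$ to reach the cubic bound. Feeding $\operatorname{cr}(G)\leq|L|^2$ into it gives $e(G)\leq 4|L|^{2/3}|E|^{2/3}$ in the regime $e(G)\geq 4|E|$; treating the complementary regime $e(G)<4|E|$ trivially and recalling $e(G)\geq I(E,L)-|L|$ recovers the incidence bound displayed above.

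Finally I would specialize to $L=L_k$. Each of its $|L_k|$ lines supports at least $k$ points of $E$, so $k\,|L_k|\leq I(E,L_k)\leq C_0\!\left(|E|^{2/3}|L_k|^{2/3}+|E|+|L_k|\right)$. For $k>2C_0$ the term $C_0|L_k|$ is absorbed into $\tfrac12 k|L_k|$, and comparing the two surviving terms on the right gives $|L_k|=O(|E|^2/k^3)$ or $|L_k|=O(|E|/k)$ according to which dominates. For the bounded range $2\leq k\leq 2C_0$ I would instead use the trivial bound $|L_k|\leq\binom{|E|}{2}\leq|E|^2/2$, which is already $O(|E|^2/k^3)$ because $k$ is bounded. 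Combining the two ranges yields (\ref{stb}). The only genuinely delicate point here is the constant bookkeeping in the final case analysis, which is routine; the substantive ingredient remains the crossing-number inequality.
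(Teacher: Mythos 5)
Your proof is correct, but you should know that the paper offers no proof of this statement at all: Theorem \ref{ST} is quoted there as the classical Szemer\'edi--Trotter theorem, with a citation to the original 1983 paper, and it serves purely as motivation (Beck's Euclidean theorem follows from it, while the paper's actual finite-field arguments replace it by Vinh's incidence bound, Theorem \ref{vinhkrutoi}). Your route is Sz\'ekely's crossing-number proof, which is genuinely different from the cell-decomposition argument of the cited original: you correctly obtain $e(G)\geq I(E,L)-|L|$ from the consecutive-points graph, $\operatorname{cr}(G)\leq\binom{|L|}{2}$ from the fact that two distinct lines cross at most once, and the amplified crossing-number inequality $\operatorname{cr}(G)\geq e(G)^3/(64\,v(G)^2)$ for $e(G)\geq 4v(G)$ by the deletion method; the concluding case analysis ($k>2C_0$ versus bounded $k$, then comparing $|E|^{2/3}|L_k|^{2/3}$ against $|E|$) is also sound. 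Two small points you leave implicit: the graph $G$ is simple, because two points of $E$ lie on at most one common line, and simplicity is needed for the Euler bound $e\leq 3v-6$; and an intersection of two lines that happens to be a point of $E$ is a vertex of the drawing rather than a crossing, which only improves your bound. As for what each approach buys: Sz\'ekely's argument is shorter and more elementary than the original cell decomposition and is now the standard proof; but, as the paper stresses, neither transfers to $\F_q^2$, since the topological input (Euler's formula for planar graphs) has no analogue over finite fields --- this is precisely why Theorem \ref{beckfinitefield} must instead be driven by Vinh's spectral incidence estimate, whose exponents are weaker in the relevant range but still strong enough for Beck-type conclusions.
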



We shall need the following finite field variant of the Szemer\'edi-Trotter theorem due to Vinh (\cite{V}).
\begin{theorem} \label{vinhkrutoi} Let $E$ be a collection of points and $L$ a collection of lines in ${\mathbb F}_q^2$. Then
\begin{equation}\label{vest} I(E,L)=\{(e,l) \in E \times L: e \in l \}| \leq |E| \cdot |L| \cdot q^{-1}+\sqrt{q \cdot |E| \cdot |L|}.\end{equation}
\end{theorem}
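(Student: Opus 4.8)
The statement to be proved is Vinh's incidence bound $I(E,L)\le |E|\,|L|\,q^{-1}+\sqrt{q\,|E|\,|L|}$, a finite-field Szemer\'edi--Trotter estimate. The plan is purely spectral: I would realize $I(E,L)$ as a bilinear form in the full point--line incidence matrix of the plane and control the off-diagonal part through the spectral gap of that matrix, i.e.\ a bipartite expander-mixing estimate. To set up, let $\mathcal P$ be the set of all $q^2$ points and $\mathcal L$ the set of all $q^2+q$ lines of $\F_q^2$, and let $M$ be the $q^2\times(q^2+q)$ incidence matrix with $M_{p,\ell}=1$ iff $p\in\ell$. Then $I(E,L)=\mathbf 1_E^{\top}M\,\mathbf 1_L$, where $\mathbf 1_E,\mathbf 1_L$ are the indicator vectors of $E\subseteq\mathcal P$ and $L\subseteq\mathcal L$. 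I would first record the two biregularity relations that will be used repeatedly: every line carries exactly $q$ points, so $M^{\top}\mathbf 1_{\mathcal P}=q\,\mathbf 1_{\mathcal L}$, and every point lies on exactly $q+1$ lines, so $M\,\mathbf 1_{\mathcal L}=(q+1)\mathbf 1_{\mathcal P}$.

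The crux is the spectral gap, obtained by computing $MM^{\top}$ entrywise. Its $(p,p')$ entry counts the lines through both $p$ and $p'$: this is $q+1$ on the diagonal, and, because two distinct points of $\F_q^2$ lie on a unique common line, exactly $1$ off the diagonal. Hence $MM^{\top}=q\,\mathrm{Id}+J$, where $J$ is the $q^2\times q^2$ all-ones matrix. Since $J$ has eigenvalue $q^2$ on $\mathbf 1_{\mathcal P}$ and $0$ on its orthogonal complement, $MM^{\top}$ has eigenvalue $q(q+1)$ with multiplicity one and eigenvalue $q$ with multiplicity $q^2-1$. Consequently $M$ has top singular value $\sqrt{q(q+1)}$, realized by the singular pair proportional to $(\mathbf 1_{\mathcal P},\mathbf 1_{\mathcal L})$, while every remaining singular value equals $\sqrt q$. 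It is precisely the fact that $MM^{\top}$ is exactly $q\,\mathrm{Id}+J$ that makes the affine case clean and produces the uniform gap $\sqrt q$.

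With the gap in hand, I would run the expander-mixing step. Decompose $\mathbf 1_E=\tfrac{|E|}{q^2}\mathbf 1_{\mathcal P}+\mathbf a$ and $\mathbf 1_L=\tfrac{|L|}{q^2+q}\mathbf 1_{\mathcal L}+\mathbf b$ into their components along the top singular directions and the orthogonal complements, so that $\mathbf a\perp\mathbf 1_{\mathcal P}$ and $\mathbf b\perp\mathbf 1_{\mathcal L}$. Expanding $\mathbf 1_E^{\top}M\,\mathbf 1_L$, the two cross terms vanish by the biregularity relations ($\mathbf 1_{\mathcal P}^{\top}M\mathbf b=q\,\mathbf 1_{\mathcal L}^{\top}\mathbf b=0$ and $\mathbf a^{\top}M\mathbf 1_{\mathcal L}=(q+1)\mathbf a^{\top}\mathbf 1_{\mathcal P}=0$), the diagonal term evaluates to $\tfrac{|E||L|(q+1)}{q(q+1)}=\tfrac{|E||L|}{q}$, and the surviving term obeys $|\mathbf a^{\top}M\mathbf b|\le\sqrt q\,\|\mathbf a\|\,\|\mathbf b\|$ because $\mathbf a,\mathbf b$ lie in the span of the non-top singular vectors. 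Using $\|\mathbf a\|^2=|E|-|E|^2/q^2\le|E|$ and $\|\mathbf b\|^2\le|L|$ gives $\bigl|\,I(E,L)-|E||L|/q\,\bigr|\le\sqrt{q\,|E|\,|L|}$, which yields the claimed upper bound.

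The only genuinely non-formal step is the eigenvalue computation $MM^{\top}=q\,\mathrm{Id}+J$; everything afterward is a routine orthogonal decomposition. The point that requires care is the affine bookkeeping, namely the counts $q^2$ points and $q^2+q$ lines, the two biregular degrees $q$ and $q+1$, and the unique-line-through-two-points identity. These are exactly what force the main term $|E||L|/q$ and the error $\sqrt{q|E||L|}$, so I would verify them explicitly rather than importing the projective-plane spectrum, where the incidence graph has the nontrivial eigenvalues $\pm\sqrt q$ but the normalizations differ.
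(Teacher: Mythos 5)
The paper does not prove this statement at all: Theorem \ref{vinhkrutoi} is quoted from Vinh \cite{V} as a black box, so there is no internal proof to compare against. Your argument is correct and complete, and it is essentially Vinh's own spectral proof: all the bookkeeping checks out ($q^{2}+q$ lines, line degree $q$, point degree $q+1$, $MM^{\top}=q\,\mathrm{Id}+J$, hence top singular value $\sqrt{q(q+1)}$ with singular pair proportional to $(\mathbf 1_{\mathcal P},\mathbf 1_{\mathcal L})$), the cross terms vanish exactly as you say, the main term is $|E||L|/q$, and the error term follows from Cauchy--Schwarz on the non-top singular directions together with $\|\mathbf a\|^{2}\le|E|$, $\|\mathbf b\|^{2}\le|L|$. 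One cosmetic point: since $M$ is $q^{2}\times(q^{2}+q)$, the matrix $M^{\top}M$ also has the eigenvalue $0$ with multiplicity $q$, so ``every remaining singular value equals $\sqrt q$'' should read ``is at most $\sqrt q$''; this only helps the inequality $|\mathbf a^{\top}M\mathbf b|\le\sqrt q\,\|\mathbf a\|\,\|\mathbf b\|$ and affects nothing. Your direct affine computation also neatly sidesteps the projective-plane normalization issue you flag at the end, and in fact proves the two-sided bound $\bigl|I(E,L)-|E||L|/q\bigr|\le\sqrt{q|E||L|}$, which is slightly stronger than the stated one-sided estimate.
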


\vskip.125in

Using $L_k$ in place of $L$ in Theorem \ref{vinhkrutoi}, we see that $I(E,L_k) \geq k|L_k|$. It then follows directly from (\ref{vest}) that if $k>\frac{|E|}{q}$, then
\begin{equation}\label{stb1} |L_k|\leq  \frac{q|E|}{(k - q^{-1}|E|)^2}.
\end{equation}

This leads us to the proof of Theorem \ref{beckfinitefield}.

\begin{proof}
Here $q$ is treated as asymptotic parameter, to which the $o(1)$ notation relates.
In order to avoid messy notation, let us assume, for convenience, that the quantities $q^{-1}|E|$, as well as $\log_2 q$ are integers. The reader shall see that we have more than enough flexibility with the constants to make this work. The key to our proof is the following assertion.

\begin{lemma} \label{alabeck} At least $\frac{|E|^2}{4}$ unordered  pairs of distinct points of $E$ are supported on the subset  ${\mathcal L}\subseteq L(E)$, defined as the set of all lines in $L(E)$ containing between $1+\frac{|E|}{2q}$ and $2\frac{|E|}{q}$ points of $E$. \end{lemma}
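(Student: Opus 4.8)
The goal is to show that the lines of intermediate richness---those supporting between $1+\frac{|E|}{2q}$ and $2\frac{|E|}{q}$ points of $E$---carry at least $\frac{|E|^2}{4}$ unordered pairs. The natural strategy is to bound from above the number of pairs carried by the two ``bad'' regimes (the sparse lines and the rich lines) and show their total is strictly less than $\frac{|E|^2}{4}$, so that the complement must carry the remaining mass. The total number of unordered pairs of distinct points is $\binom{|E|}{2}\approx\frac{|E|^2}{2}$, and every such pair lies on exactly one line of $L(E)$; so it suffices to show that the sparse lines and the rich lines together support fewer than $\binom{|E|}{2}-\frac{|E|^2}{4}\approx\frac{|E|^2}{4}$ pairs.

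\textbf{Step 1: bounding the rich lines.}
For lines supporting at least $k$ points, each contributes $\binom{k}{2}$ pairs, and I would sum the incidence bound (\ref{stb1}) dyadically. Writing the count of pairs on lines with between $k$ and $2k$ points as roughly $k^2|L_k|$ and invoking $|L_k|\leq \frac{q|E|}{(k-q^{-1}|E|)^2}$, the contribution of each dyadic block with $k\geq 2\frac{|E|}{q}$ is $O(q|E|)$, and summing over the $O(\log q)$ relevant scales up to $k\sim|E|$ gives a total of $O(q|E|\log q)$ pairs on the rich lines. Under the hypothesis $|E|\geq 64q\log q$ this is a small fraction of $|E|^2$, which is exactly where the logarithmic factor and the constant $64$ are spent.

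\textbf{Step 2: bounding the sparse lines.}
For lines supporting at most $1+\frac{|E|}{2q}$ points I cannot use (\ref{stb1}) directly (it is only valid for $k>\frac{|E|}{q}$), so instead I would count pairs by a first-moment/degree argument. A line through a fixed point $z\in E$ that is sparse carries at most $\frac{|E|}{2q}$ other points of $E$; since there are $q+1$ lines through $z$, the sparse lines through $z$ account for at most $(q+1)\cdot\frac{|E|}{2q}\approx\frac{|E|}{2}$ ordered incident pairs $(z,\cdot)$. Summing over $z\in E$ bounds the ordered pairs lying on sparse lines by roughly $\frac{|E|^2}{2}$; crucially the factor $\frac12$ coming from the sparse threshold $\frac{|E|}{2q}$ means the \emph{unordered} sparse pairs number at most about $\frac{|E|^2}{4}\cdot\frac12=\frac{|E|^2}{4}$ with a saving, and a careful accounting (subtracting the unavoidable pairs on any single line) pushes this strictly below $\frac{|E|^2}{4}$. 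Combining Steps 1 and 2, the intermediate lines are forced to carry at least $\frac{|E|^2}{4}$ pairs.

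\textbf{Main obstacle.}
The delicate point is the bookkeeping at the sparse end: the threshold $1+\frac{|E|}{2q}$ is chosen precisely so that the sparse-line pair count falls below half of $\binom{|E|}{2}$, and one must track the additive constants ($1+\cdots$ versus $\frac{|E|}{2q}$) carefully rather than asymptotically, since they interact with the claimed constant $\frac14$. The rich end is more forgiving but is where the $\log q$ is unavoidable: the dyadic sum over scales genuinely produces a logarithm, and it is this term that dictates the threshold $|E|=\Omega(q\log q)$. I expect the sparse-side constant chase, not the rich-side summation, to be the step requiring the most care to make the two bounds close with the stated numerical constant.
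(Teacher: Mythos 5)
Your Step 1 (the dyadic decomposition of the rich lines combined with the incidence bound (\ref{stb1}), giving $O(q|E|\log q)$ pairs, which is a small fraction of $|E|^2$ under the hypothesis $|E|\geq 64q\log_2 q$) is exactly the paper's argument and is fine. The genuine gap is in Step 2. Your per-point degree argument bounds the ordered pairs on sparse lines through a fixed $z$ by $(q+1)\cdot\frac{|E|}{2q}\approx\frac{|E|}{2}$, hence unordered sparse pairs by about $\frac{|E|^2}{4}$. But a bound of (just under) $\frac{|E|^2}{4}$ is not sufficient: the total number of unordered pairs is only about $\frac{|E|^2}{2}$, and the rich lines already eat up to roughly $\frac{|E|^2}{16}$ of them (that is what your own Step 1 allows, with the constant $64$), so your two steps combined leave only about $\frac{3|E|^2}{16}$ pairs for the intermediate lines $\mathcal{L}$, short of the claimed $\frac{|E|^2}{4}$. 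No "careful accounting" of the additive constants can rescue this, because the per-point bound is genuinely lossy by a factor of $2$: pointwise, the extremal scenario (about $q$ saturated sparse lines through each $z$, carrying half of $E$) is perfectly consistent, so the estimate $\frac{|E|^2}{4}$ is the true limit of that method. What rules this scenario out is a global constraint that your argument never uses: it would require at least $|E|q\big/\frac{|E|}{2q}=2q^2$ distinct sparse lines, while $\F_q^2$ contains only $q(q+1)$ lines in total.

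That global line count is precisely the paper's (much simpler) Step 2: every sparse line carries at most $\frac{1}{2}\left(\frac{|E|}{2q}\right)^2=\frac{|E|^2}{8q^2}$ unordered pairs, so all sparse lines together carry at most $q(q+1)\cdot\frac{|E|^2}{8q^2}\approx\frac{|E|^2}{8}<\frac{|E|^2}{6}$ pairs. Combined with the rich-line bound (which yields that lines with at most $\frac{2|E|}{q}$ points support at least $\frac{5|E|^2}{12}$ pairs), this gives $\frac{5|E|^2}{12}-\frac{|E|^2}{6}=\frac{|E|^2}{4}$, as claimed. So the fix is to replace your sum over points by a sum over lines, using the cardinality of the set of all lines in $\F_q^2$; incidentally, this also reverses your assessment of where the difficulty lies --- with the global count the sparse side is the trivial part, and the only place where the hypothesis $|E|=\Omega(q\log_2 q)$ is needed is the rich side, exactly as in your Step 1.
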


\begin{proof}
With a slight abuse of the notation, let $L^j$ be the set of lines from $L(E)$, supporting no fewer than $q^{-1}|E|\cdot 2^j$ and no more than $q^{-1}|E|\cdot 2^{j+1}$ points of $E$, where $j$ is an integer ranging between $1$ and $\log_2 q$. The maximum number of points on a line from $L^j$ is $q^{-1}|E|\cdot 2^{j+1}$, so the number of unordered  pairs of distinct points is smaller than
$q^{-2}|E|^2\cdot 2^{2j+1}$. On the other hand, by (\ref{stb1}),
$$
|L^j|\leq \frac{q|E|} { q^{-2}|E|^2 \cdot (2^j-1)^2 }
$$
Hence, the number  of  unordered  pairs of distinct points of $E$ supported on any $L^j$, for $j\geq 2$ is bounded by
$$
q|E|\frac{2^{2j+1}}{ (2^j-1)^2 } \leq 4|E| q,\mbox{ for }j\geq 2,
$$
and by $8|E|q$ for $j=1$. Then the number  of  unordered  pairs of distinct points of $E$ supported on the union of $L^j$, for $j\geq 1$ is bounded by $4|E|q(1+\log_2 q)$. Then, if $|E| \geq 64 q \log_2 q,$ and $q$ is large enough, the latter bound constitutes only a small proportion of the total number $\frac{|E|(|E|-1)}{2}$ of unordered  pairs of distinct points of $E$. More precisely, it follows, with the above choice of constants (for $q$ large enough and $q=o(|E|)$) that the number of unordered  pairs of distinct points of $E$, supported on lines in $L(E)$, each supporting at most $2q^{-1}|E|$ points, is at least  $\frac{5|E|^2}{12}.$

There are only $q(q+1)$ distinct lines in $\F_q^2$, so lines from $L(E)$ with fewer than $1+\frac{|E|}{2q}$ points (for $q$ large enough and $q=o(|E|)$) may support fewer than $\frac{|E|^2}{6}$ pairs of distinct points of $E$. This completes the proof of Lemma \ref{alabeck}.
\end{proof}

\medskip

To complete the proof of Theorem \ref{beckfinitefield}, let $l\in \mathcal L$, the set ${\mathcal L}$ provided by Lemma \ref{alabeck}, and $\nu(l)$ is the number of points of $E$ on $l$. It follows from Lemma \ref{alabeck} that
$$
\sum_{l\in \mathcal L} \nu^2(l) > 2 \sum_{l\in \mathcal L} \frac{\nu(l)(\nu(l)-1)}{2} \geq \frac{|E|^2}{2}.
$$
Dividing this by the maximum value of $\nu(l)\leq 2q^{-1}|E|$, we obtain
for the total number of incidences
\begin{equation}
I(E,\mathcal L)=\sum_{l\in \mathcal L} \nu(l)\geq \frac{q|E|}{4},
\label{ibd}\end{equation}
dividing by $2q^{-1}|E|$ once more yields the desired bound
$$|{\mathcal L}|\geq \frac{q^2}{8}.$$

Returning to (\ref{ibd}), by the pigeonhole principle, there exists a point $z\in E$ with at least $\frac{q}{4}$ lines of ${\mathcal L}$ passing through it. This completes the proof of Theorem \ref{beckfinitefield}.
\end{proof}

\subsection{Combining Fourier (section \ref{fouriersubsection}) and Beck (section \ref{becksubsection}) estimates}

\vskip.125in
We now prove the claim ii) of Theorem \ref{2d}.

\begin{proof} Let us refine the initial set $E$ to the set $E'$, containing $z$, provided by Theorem \ref{beckfinitefield}, and exactly $2q^{-1}|E|$ points of $E$ other than $z$ on exactly $\frac{q}{4}$ lines incident to $z$. The flexibility in the choice of constants in the proof of Theorem \ref{beckfinitefield} enables one to treat $\frac{q}{4}$ as integer. It follows that

\begin{equation}|E'|>\frac{|E|}{8}\geq 8q \log_2 q.\label{eprlb}\end{equation}
Now place $z$ to the origin: let $E'_z=E'-z$ an apply Theorem \ref{L2}. More precisely, in the application of the Lemma let $G={E'_z}^{\perp}$ and $F=E'_z\setminus\{0\}.$
Apply the estimate (\ref{L2estimate}) to the sets $F$ and $G$. In view of (\ref{eprlb}) and by Lemma \ref{alabeck}, the quantity $|F\cap l_x|$, for any $x\neq 0$, in the second term of the estimate (\ref{L2estimate}) is bounded by $\frac{16|F|}{q}$. Thus, once more by (\ref{eprlb}), the second term in the estimate (\ref{L2estimate}) is dominated by the first one. Indeed, it follows by (\ref{eprlb}) and the construction of $|E'_z|$ that the number of pairs $(x,y)\in E'_z\times E'_z$, such that $x,y$ lie on the same line through the origin, which is $O(q^{-1}|E|)$, is $o(q^{-2}|E'_z|^2)$. Then
the estimate (\ref{L2estimate}) yields
\begin{equation}
\sum_t \nu^2(t) \leq \frac{|E'|^4}{q}(1+o(1)).
\label{alm}\end{equation}

Now the claim ii) of Theorem (\ref{2d}) follows by the Cauchy-Schwarz inequality in the following way. We have
$$\begin{aligned}|V^z_2(E)|&\geq
(|V^z_2(E')|=|\{t:\nu(t)\neq 0\}|)\\ &  \geq \frac{|\#(x,y)\in F\times G: x\cdot y\neq 0\}|^2}{\sum_t \nu^2(t) }\\
&\geq \frac{|E'|^4(1-o(1))}{ q^{-1}|E'|^4(1+o(1))}\\
&>\frac{q}{2}.\end{aligned}
$$
This completes the proof of part ii) of Theorem \ref{2d}.

\end{proof}

\vskip.125in

\section{Proof of Corollary \ref{dveruki}}

\begin{proof} We proceed by induction on the dimension. Suppose that part i) holds for the dimension $d-1$, $d \ge 3$ and part ii) holds for dimension $d-1$ with $d \ge 4$. The base of the induction for part i) is Theorem \ref{2d}, and for part ii) it is the result of Vinh (\cite{V2}) that $V_3(E)=\F_p\setminus\{0\}$ for $E\subseteq\F_q^3$ with $|E|\geq 2q^2$.

Consider now the intersections of $E\subseteq \F_q^d$ with hyperplanes $H_c=\{x: x_d= c\}$. By the pigeonhole principle, there exists $c$ such that $|E \cap H_c |>q^{d-2}$ for part i), and $|E \cap H_c| \geq 2q^{d-2}$ for part ii) of Corollary \ref{dveruki}.

Since $V_d(E)$ is invariant under translations, we may assume that $c=0$.  In addition, since $|E \cap H_c|>q^{d-1}$, there must be a point $z\in E \backslash H_0$, which means that $z_d \not=0$.

By the induction assumption the set $V_{d-1}(E \cap H_0)$ satisfies the conclusion of Corollary \ref{dveruki}. It follows that
$$
|V_d(E)|\geq |V_d^z((E \cap H_0) \cup \{z\})| = |V_{d-1}(E \cap H_0)|.
$$

To verify the latter equality, since $x^j_d=0$ for every $x^j \in E \cap H_0$, the elements of $V_d^z(E)$ are determinants of size $d+1$ of the form
$$
\left| \begin{array}{ccccc} 1&\ldots&1&1\\
x_1^1&\ldots&x^d_1&z_1\\
\vdots&\ddots&\ddots&\vdots\\
x^1_{d-1}&\ldots&x^d_{d-1}&z_{d-1}\\
0&\ldots&0&z_d
\end{array} \right| = z_d \left| \begin{array}{ccccc} 1&\ldots&1\\
x_1^1&\ldots&x^d_1\\
\vdots&\ddots&\vdots\\
x^1_{d-1}&\ldots&x^d_{d-1}.
\end{array} \right|
$$
This completes the proof of Corollary \ref{dveruki}.
\end{proof}

\newpage

\end{document}